\titleformat*{\subsection}{\Large\bfseries}
\titleformat*{\subsubsection}{\large\bfseries}
\titleformat*{\paragraph}{\large\bfseries}
\titleformat*{\subparagraph}{\large\bfseries}
\renewcommand{\@seccntformat}[1]{\csname the#1\endcsname. }
\theoremstyle{plain}
\newtheorem{thm}{Theorem}[section]
\newtheorem{lem}[thm]{Lemma}
\theoremstyle{definition}
\newtheorem{defn}[thm]{Definition}
\newtheorem{rem}[thm]{Remark}
\providecommand{\keywords}[1]{{\bf{Keywords:}} #1}
\providecommand{\subjectclass}[2]{\textbf{Mathematics subject classification 2020:} #1}
\title{Dynamical characterization of central sets along filter
\footnote{\keywords{Algebra in the Stone-\v{C}ech compactification, Topological dynamics,
Central set}}}
\date{}
\author{pintu debnath
\footnote{Department of Mathematics, Basirhat College, Basirhat -743412, North
24th parganas, West Bengal, India.
           {\bf pintumath1989@gmail.com}}
           \and
Sayan Goswami
\footnote{Department of Mathematics, 
          University of Kalyani, 
          Kalyani-741235,
          Nadia, West Bengal, India
          {\bf sayan92m@gmail.com}}
}
\begin{document}
\maketitle

\begin{abstract}
\noindent Using the notions of Topological dynamics, H. Furstenberg
defined central sets and proved the Central Sets Theorem. Later V.
Bergelson and N. Hindman characterized central sets in terms of algebra
of the Stone-\v{C}ech Compactification of discrete semigroup. They
found that central sets are the members of the minimal idempotents
of $\beta S$, the Stone-\v{C}ech Compactification of a semigroup
$\left(S,\cdot\right)$. We know that any closed subsemigroup of $\beta S$
is generated by a filter. We call a set $A$ to be a $\mathcal{F}$-
central set if it is a member of a minimal idempotent of a closed
subsemigroup of $\beta S$, generated by the filter $\mathcal{F}$.
In this article we will characterize the $\mathcal{F}$-central sets
dynamically.
\end{abstract}
\subjectclass{37B05, 05D10}

\section{Introduction}

H. Frustenberg introduced the notion of central sets \cite[Defination 8.3]{key-2}
and proved several combinatorial properties of such sets using topological
dynamics. Later, V. Bergelson and N. Hindman in \cite{key-1}, established
an algebraic characterization of central sets. For arbitrary semigroup,
the interplay of central sets between algebra of the Stone-\v{C}ech
Compactification and Topological dynamics was explored in \cite{key-4-4}.
\begin{defn}
A dynamical system is a pair $\left(X,\langle T_{s}\rangle_{s\in S}\right)$
such that
\begin{enumerate}
\item $X$ is a compact topological space;
\item $S$ is a semigroup;
\item for each s $s\in S$, $T_{s}$ is a continuous function from $X$
to $X$; and
\item For all $s,t\in S$, $T_{s}\circ T_{t}=T_{st}$.
\end{enumerate}
\end{defn}

\noindent For any discrete semigroup $\left(S,\cdot\right)$, central set was
defined as the member of the minimal idempotents of its Stone-\v{C}ech
Compactification, say $\beta S$. To state dynamical characterization
of central sets, we need the following definitions.
\begin{defn}
Let $S$ be a discrete semigroup and $\left(X,\langle T_{s}\rangle_{s\in S}\right)$
be a dynamical system.
\begin{enumerate}
\item Let $A$ be a subset of $S.$ Then the set $A$ is called syndetic
if and only if there exists a finite subset $F$ of $S$ such that
$S=\cup_{t\in F}t^{-1}A$.
\item A point $x\in X$ is uniformly recurrent point if and only if for
each neighbourhood $U$ of $x$, $\left\{ s\in S:T_{s}x\in U\right\} $
is syndetic.
\item $x,y\in X$ are called proximal if and only if for every neighbourhood
$U$ of the diagonal in $X\times X$, there exists $s\in F$ such
that $(T_{s}\left(x\right),T_{s}(y)\in U$.
\end{enumerate}
\end{defn}

\noindent From \cite[Theorem 2.4]{key-4-4}, we know the following theorem.
\begin{thm}
Let $S$ be a semigroup and let $B\subseteq S$. Then $B$ is central
if and only if there exists a dynamical system $\left(X,\langle T_{s}\rangle_{s\in S}\right)$,
two points $x,y\in X$, and a neighbourhood $U$ of $y$ such that
$x$ and $y$ are proximal, $y$ is uniformly recurrent and $B=\left\{ s\in S:T_{s}\left(x\right)\in U\right\} $.
\end{thm}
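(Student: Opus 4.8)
The plan is to prove both directions through the action of $\beta S$ on $X$ that canonically extends the maps $T_s$. For each $p\in\beta S$ define $T_p\colon X\to X$ by $T_p(z)=p\text{-}\lim_{s}T_s(z)$; the two facts I would lean on are that $T_{pq}=T_p\circ T_q$ and that, for each fixed $z$, the map $p\mapsto T_p(z)$ is continuous on $\beta S$ (it is the continuous extension of $s\mapsto T_s(z)$, available since $X$ is compact Hausdorff). The whole argument then reduces to matching the dynamical hypotheses with the algebraic structure of $\beta S$: uniform recurrence of $y$ with $y$ lying in a minimal subsystem, and proximality of $x,y$ with the existence of $v\in\beta S$ satisfying $T_v(x)=T_v(y)$.

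For the direction ``dynamical $\Rightarrow$ central'' I would first record that $y$ uniformly recurrent makes $Y=\overline{\{T_s(y):s\in S\}}$ a minimal subsystem. From proximality pick $v_0$ with $T_{v_0}(x)=T_{v_0}(y)$; then for every $t$ one has $T_{tv_0}(x)=T_{tv_0}(y)$, so the set $J=\{v:T_v(x)=T_v(y)\}$ contains the left ideal $\beta S\,v_0$ and hence a minimal left ideal $L\subseteq J$. Next I would show that the continuous, $S$-equivariant image $\{T_v(y):v\in L\}$ is a nonempty closed invariant subset of the minimal set $Y$, so it equals $Y$ and in particular contains $y$; thus $\{v\in L:T_v(y)=y\}$ is a nonempty compact subsemigroup of $L$ and so contains an idempotent $r$, which is minimal because $r\in L$. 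Since $r\in L\subseteq J$, this gives $T_r(x)=T_r(y)=y$. Finally, because $T_r(x)=r\text{-}\lim_s T_s(x)=y$ and $U$ is a neighbourhood of $y$, the defining property of the ultrafilter limit yields $\{s:T_s(x)\in U\}=B\in r$, so $B$ is a member of a minimal idempotent, i.e.\ central.

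For ``central $\Rightarrow$ dynamical'' I would take the universal system $X=\beta S$ with $T_s(p)=sp$; each $T_s$ is continuous and $T_s\circ T_t=T_{st}$, and its minimal subsystems are exactly the minimal left ideals of $\beta S$. Fix a minimal idempotent $r$ with $B\in r$ and set $y=r$ and $U=\overline{B}=\{p:B\in p\}$, a clopen neighbourhood of $y$. Since $r$ lies in the minimal left ideal $\beta S\,r$, it is uniformly recurrent. Choosing $x$ to be the identity $1$ (adjoining one to $S$ if necessary, which does not affect membership of $B$ in ultrafilters of $S$), one has $T_r(1)=r=T_r(r)$, so $x$ and $y=r$ are proximal, while $T_s(x)=s\in\overline{B}$ precisely when $s\in B$, giving $B=\{s:T_s(x)\in U\}$ exactly.

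The step I expect to be the main obstacle is the heart of the first direction: producing a single minimal idempotent $r$ that simultaneously fixes $y$ and witnesses the proximality of $x$ with $y$. This is where the minimal left ideal coming from proximality must be combined with the minimality of the orbit closure of $y$, and it rests on the continuity of $p\mapsto T_p(z)$ together with the standard structure theory of minimal left ideals (existence of an idempotent fixing a prescribed point). A secondary technical nuisance is the possible absence of an identity in $S$ in the converse direction, which I would dispose of by passing to $S^1$.
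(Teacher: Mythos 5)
Your proposal is correct, but it takes a genuinely different route from the paper's in places (note that the paper itself only quotes this statement from \cite{key-4-4}; its own argument is the proof of the filter generalization, Theorem \ref{main}, which yields this statement as the case $\mathcal{F}=\left\{ S\right\} $, so that is the natural comparison). In the direction ``dynamical $\Rightarrow$ central'' your skeleton matches the paper's: proximality gives $v_{0}$ with $T_{v_{0}}(x)=T_{v_{0}}(y)$, hence a minimal left ideal $L$ inside $J=\left\{ v:T_{v}(x)=T_{v}(y)\right\} $ (this is the paper's Lemma \ref{15}), and then Ellis's theorem is applied to the compact subsemigroup $\left\{ v\in L:T_{v}(y)=y\right\} $. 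Where you differ is in proving that this subsemigroup is nonempty: you derive it from the recorded fact that uniform recurrence of $y$ makes the orbit closure $Y$ minimal, and then push the closed invariant set $\left\{ T_{v}(y):v\in L\right\} $ onto $Y$; the paper instead proves directly (Theorem \ref{4eq}, $(a)\Rightarrow(b)$) that every minimal left ideal contains some $u$ with $T_{u}(y)=y$, via an explicit syndetic construction (the elements $t_{U,F}$, the sets $C_{U}$, and $u=w\cdot v$). Your version is tidier, but be aware that your recorded fact is exactly where the combinatorial heart lives: for general semigroup actions its proof is the same syndeticity-plus-ultrafilter-limit argument, so the work is relocated rather than avoided. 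In the converse direction ``central $\Rightarrow$ dynamical'' you genuinely diverge: the paper builds the symbolic shift on $\prod_{s\in S\cup\left\{ e\right\} }\left\{ 0,1\right\} $ with $x=\chi_{B}$, $y=T_{u}(x)$ and $U=\left\{ z:z(e)=y(e)\right\} $, whereas you use left translation on $\beta(S^{1})$ with $x=1$, $y=r$, $U=\overline{B}$. Your construction is valid, but the phrase ``does not affect membership of $B$'' hides the actual point to be checked: $\beta S$ is an ideal of $\beta(S^{1})$, hence $K\left(\beta(S^{1})\right)=K\left(\beta S\right)$ and $r$ remains a minimal idempotent after the passage to $S^{1}$; you also need that every point of a minimal left ideal is uniformly recurrent for the translation action (the finite-subcover argument). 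What the paper's shift construction buys is that no such adjustment is needed (the identity $e$ is adjoined only to the index set of the product space), and, more importantly, that it transfers verbatim to the $\mathcal{F}$-central setting of Theorem \ref{main}, where your translation system would need reworking since the relevant semigroup is $\overline{\mathcal{F}}$ rather than all of $\beta S$.
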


We will extend this result for certain filters $\mathcal{F}$, over
any discrete semigroup $S$, which generates a closed subsemigroup
in the space of ultrafilters, say $\beta S$ (see preliminaries section).
A sets is $\mathcal{F}$-central if it is a member of any minimal
idempotent of the semigroup generated by $\mathcal{F}$ (see definition \ref{Fcen}). Recently
in \cite{key-3}, the Central Sets Theorem along some filters has
been established. Before stating our main theorem, let us define the
analogous notion of uniformly recurrent and proximality along a filter
$\mathcal{F}.$
\begin{defn}
\label{fdym} Let $S$ be a semigroup, and $\left(X,\langle T_{s}\rangle_{s\in S}\right)$
be a dynamical system. Let $T$ be a closed subsemigroup of $\beta S$
such that for filter $\mathcal{F}$, $\overline{\mathcal{F}}=T$.
\begin{enumerate}
\item $A\subseteq S$ is $\mathcal{F}$-syndetic if for every $F\in\mathcal{F}$,
there is a finite set $G\subseteq F$ such that $G^{-1}A\in\mathcal{F}$.
\item A point $x\in X$ is $\mathcal{F}$-uniformly recurrent point if and
only if for each neighbourhood $U$ of $x$, $\left\{ s\in S:T_{s}x\in U\right\} $
is $\mathcal{F}$-syndetic.
\item $x,y\in X$ are $\mathcal{F}$-proximal if and only if every neighbourhood
$U$ of the diagonal in $X\times X$ and for each $F\in\mathcal{F}$
there exists $s\in F$ such that $(T_{s}\left(x\right),T_{s}(y)\in U$.
\end{enumerate}
The following is our one:
\end{defn}

\begin{thm}
\label{main} Let $S$ be a semigroup and let $B\subseteq S$. Then
$B$ is $\mathcal{F}$-central if and only if there exists a dynamical
system $\left(X,\langle T_{s}\rangle_{s\in S}\right)$ and there exist
$x,y\in X$ and a neighbourhood $U$ of $y$ such that $x$ and $y$
are $\mathcal{F}$- proximal, $y$ is $\mathcal{F}$-uniformly recurrent
and $B=\left\{ s\in S:T_{s}\left(x\right)\in U\right\} $.
\end{thm}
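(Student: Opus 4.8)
The plan is to adapt the classical proof of the dynamical characterization of central sets (the stated Theorem~1.3) to the relativized filter setting, working throughout inside the closed subsemigroup $T=\overline{\mathcal{F}}$ rather than all of $\beta S$. The guiding principle is that every notion in Definition~\ref{fdym} is the exact ``$\mathcal{F}$-shadow'' of the classical notion once one replaces $\beta S$ by $T$; so I expect the whole argument to go through provided I verify that $T$ is itself a compact right-topological semigroup in which minimal idempotents and the structure theory of the smallest two-sided ideal are available. I would begin by recording the standard facts: since $\mathcal{F}$ generates the closed subsemigroup $T=\overline{\mathcal{F}}=\bigcap_{F\in\mathcal{F}}\overline{F}$, the set $T$ is a nonempty compact subsemigroup of $\beta S$, hence has a smallest ideal $K(T)$ and minimal idempotents, and membership of a set $A$ in an ultrafilter $p\in T$ is governed by the basic clopen sets $\overline{A}=\{q\in\beta S:A\in q\}$.

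For the forward direction, I would suppose $B$ is $\mathcal{F}$-central, so $B\in p$ for some minimal idempotent $p\in T$. The plan is to build the canonical dynamical system on $X=\beta S$ (or on $T$ itself) with $T_s=\lambda_s$ the extension of left translation, take $x$ to be the principal ultrafilter (or any suitable point) and $y=p\cdot x$, and let $U=\overline{B}$. Then $B=\{s\in S:T_s(x)\in U\}$ should fall out from the definition of the product of ultrafilters. The two dynamical conditions then translate into algebra: $\mathcal{F}$-proximality of $x,y$ should reduce to the fact that $p$ lies in the smallest ideal of $T$, and $\mathcal{F}$-uniform recurrence of $y$ should reduce to the statement that for every neighbourhood the return set is $\mathcal{F}$-syndetic, which is precisely the condition that $y$ belongs to a minimal left ideal of $T$. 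Here the key lemma I would isolate and prove separately is the equivalence \emph{$A$ is $\mathcal{F}$-syndetic $\iff$ $\overline{A}\cap T$ meets every minimal left ideal of $T$}, since this is what couples the combinatorial Definition~\ref{fdym}(1) to the algebraic picture.

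For the reverse direction, I would assume given the dynamical data $(X,\langle T_s\rangle)$, the points $x,y$, and $U\ni y$ with $x,y$ being $\mathcal{F}$-proximal and $y$ being $\mathcal{F}$-uniformly recurrent, and with $B=\{s\in S:T_s(x)\in U\}$. The strategy is to pass to $\beta S$: extend the action to a map on $T$, use $\mathcal{F}$-uniform recurrence of $y$ to produce a minimal idempotent $p\in T$ with $p\cdot y=y$ (this is where uniform recurrence is exactly the hypothesis that lets one find an idempotent in a minimal left ideal fixing $y$), and then use $\mathcal{F}$-proximality of $x$ and $y$ to transfer this to $x$, obtaining a minimal idempotent of $T$ in whose associated ultrafilter $B$ must lie, whence $B$ is $\mathcal{F}$-central.

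The main obstacle, I expect, will be the careful handling of the two quantifier-relativizations ``for every $F\in\mathcal{F}$'' that appear in the definitions of $\mathcal{F}$-syndeticity and $\mathcal{F}$-proximality, and in particular verifying that these unwind to genuine statements about the minimal left ideals of $T$ \emph{as a subsemigroup} rather than of $\beta S$. In the unrelativized case one freely uses that $K(\beta S)$ meets every ultrafilter-theoretic return set; here one must instead ensure that the idempotents one produces actually live in $T=\overline{\mathcal{F}}$ and are minimal \emph{within} $T$, so the delicate point is showing that the $\mathcal{F}$-syndeticity condition correctly characterizes membership in $K(T)$ and that $T$ absorbs the relevant products. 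Once the syndeticity-versus-minimal-left-ideal lemma is established cleanly inside $T$, the remainder should parallel the classical proof of Theorem~1.3 step for step.
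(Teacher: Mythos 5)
Your plan is correct in outline, and in the converse direction it retraces the paper's own argument: extend the action to $\overline{\mathcal{F}}$ via the enveloping semigroup (Remark \ref{rem}), use $\mathcal{F}$-uniform recurrence to get a minimal idempotent fixing $y$, use $\mathcal{F}$-proximality to pass to $x$, and conclude $B\in u$. Where you genuinely diverge is the forward direction and the choice of organizing lemma. The paper builds the shift system on $X=\prod_{s\in S\cup\{e\}}\{0,1\}$ with $x=\chi_{B}$, $y=T_{u}(x)$, and $U$ the cylinder set at the coordinate $e$, and it channels all the algebra through Lemma \ref{11} ($\mathcal{F}$-proximality $\iff$ $T_{p}(x)=T_{p}(y)$ for some $p\in\overline{\mathcal{F}}$) and Theorem \ref{4eq} (recurrence $\iff$ an idempotent in an \emph{arbitrary} minimal left ideal fixes the point). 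You instead act by translations on $\beta S$, take $x$ the principal ultrafilter at an identity, $y=p$, $U=\overline{B}$, so that $T_{q}(x)=q$, $T_{q}(y)=qp$ and everything becomes naked algebra; your proposed key lemma ($A$ is $\mathcal{F}$-syndetic $\iff$ $\overline{A}$ meets every minimal left ideal of $\overline{\mathcal{F}}$) is true and, with compactness of minimal left ideals, does yield the recurrence equivalences, so this is a viable and arguably cleaner decomposition. Three details you must still nail down: you need to adjoin an identity (acting on $\beta(S\cup\{e\})$) so that the base point exists and $\{s:T_{s}(x)\in\overline{B}\}$ equals $B$ exactly; proximality of $x,y$ needs only idempotency of $p$, not minimality (minimality is what recurrence uses), so your phrasing conflates the two; and in the converse the idempotent fixing $y$ must be located inside the \emph{specific} minimal left ideal supplied by proximality --- the paper does this by proving Lemma \ref{15} first and then applying Theorem \ref{4eq} to that ideal, whereas your sketch finds the idempotent first and then ``transfers'', which requires an additional algebraic repair.
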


\section{Preliminaries}

Let us first give a brief review of algebraic structure of the Stone-\v{C}ech
compactification of any discrete semigroup $S$.

The set $\{\overline{A}:A\subset S\}$ is a basis for the closed sets
of $\beta S$. The operation `$\cdot$' on $S$ can be extended to
the Stone-\v{C}ech compactification $\beta S$ of $S$ so that$(\beta S,\cdot)$
is a compact right topological semigroup (meaning that for any \ 
is continuous) with $S$ contained in its topological center (meaning
that for any $x\in S$, the function $\lambda_{x}:\beta S\rightarrow\beta S$
defined by $\lambda_{x}(q)=x\cdot q$ is continuous). This is a famous
Theorem due to Ellis that if $S$ is a compact right topological semigroup
then the set of idempotents $E\left(S\right)\neq\emptyset$. A non-empty
subset $I$ of a semigroup $T$ is called a $\textit{left ideal}$
of $S$ if $TI\subset I$, a $\textit{right ideal}$ if $IT\subset I$,
and a $\textit{two sided ideal}$ (or simply an $\textit{ideal}$)
if it is both a left and right ideal. A $\textit{minimal left ideal}$
is the left ideal that does not contain any proper left ideal. Similarly,
we can define $\textit{minimal right ideal}$ and $\textit{smallest ideal}$.

 Any compact Hausdorff right topological semigroup $T$ has the smallest
two sided ideal

\[
\begin{array}{ccc}
K(T) & = & \bigcup\{L:L\text{ is a minimal left ideal of }T\}\\
 & = & \,\,\,\,\,\bigcup\{R:R\text{ is a minimal right ideal of }T\}.
\end{array}
\]

\noindent Given a minimal left ideal $L$ and a minimal right ideal $R$, $L\cap R$
is a group, and in particular contains an idempotent. If $p$ and
$q$ are idempotents in $T$ we write $p\leq q$ if and only if $pq=qp=p$.
An idempotent is minimal with respect to this relation if and only
if it is a member of the smallest ideal $K(T)$ of $T$. Given $p,q\in\beta S$
and $A\subseteq S$, $A\in p\cdot q$ if and only if the set $\{x\in S:x^{-1}A\in q\}\in p$,
where $x^{-1}A=\{y\in S:x\cdot y\in A\}$. See \cite{key-4-2} for
an elementary introduction to the algebra of $\beta S$ and for any
unfamiliar details.
\begin{defn}
Let $S$ be a discrete semigroup and let $C$ be a subset of $S$.
Then $C$ is central if and only if there is an idempotent $p$ in
$K\left(\beta S\right)$ such that $C\in p$.
\end{defn}

\noindent For every filter $\mathcal{F}$, on the semigroup $S$, define $\overline{\mathcal{F}}\subseteq\beta S$
by $\overline{\mathcal{F}}=\cap_{F\in\mathcal{F}}\overline{F}$. Note
that $\overline{\mathcal{F}}$ is closed subset of $\beta S$ consisting
of all ultrafilters on $S$ that contain $\mathcal{F}$. Conversely,
every closed subset of $\beta S$ is uniquely represented in such
a form. If $\mathcal{F}$ is idempotent filter, i.e. $\mathcal{F}\supset\mathcal{F}\cdot\mathcal{F}$,
then $\overline{\mathcal{F}}$ becomes a semigroup, but the converse
is not true always. In this article, without mentioned further, we
will consider only those filters $\mathcal{F}$, which generates a
closed subsemigroup of $\beta S$. For details readers can see \cite{key-5-5}.
For some new development in this area we refer \cite{key-1.1}.
\begin{defn} [$\mathcal{F}$-central set]\label{Fcen}
Let $S$ be a discrete semigroup and let $\mathcal{F}$ generates a closed subsemigroup of $\beta S$. Then a set $C$ is said to be $\mathcal{F}$-central if and only if there is an idempotent $p$ in
$K\left(\overline{\mathcal{F}}\right)$ such that $C\in p$.
\end{defn}
\begin{rem}
For $\beta S$ , we know that $\mathcal{F}=\left\{ S\right\} $. Then
$\mathcal{F}$-central sets are nothing but the usual central sets.
Let $S$ is a dense subsemigroup of $\left(\left(0,\infty\right),+\right)$,
and $0^{+}\left(S\right)=\left\{ p\in\beta S:\,\text{for}\,\text{any}\,\epsilon>0,S\cap\left(0,\epsilon\right)\in p\right\} $.
Then $\mathcal{F}=\left\{ \left(0,\epsilon\right)\cap S:\epsilon>0\right\} $
and $\overline{\mathcal{F}}=0^{+}\left(S\right)$. A set $C\subseteq S$
is central set near zero if and only if there is some idempotent $p\in K\left(0^{+}\left(S\right)\right)$
with $C\in p$. So in this case $\mathcal{F}$-centrals are central
sets near zero. See \cite{key-4-1} for combinatorial results of central
sets near zero and \cite[Theorem 2.12]{key-4-3} for dynamical characterizations
of central sets near zero.
\end{rem}

\noindent If $\left(X,\langle T_{s}\rangle_{s\in S}\right)$ be a dynamical
system, then $\overline{\left\{ T_{s}:s\in S\right\} }$ in $X^{X}$
is a semigroup, which is referred as enveloping semigroup of the dynamical
system. Now we recall the following theorem.
\begin{thm} [{\rm \cite[Theorem 19.11]{key-4-2}}]
Let $\left(X,\langle T_{s}\rangle_{s\in S}\right)$
be a dynamical system and define $\theta:S\rightarrow X^{X}$ by $\theta\left(s\right)=T_{s}$.
Then is a continuous homomorphism from $\beta S$ onto the enveloping
semigroup of $\left(X,\langle T_{s}\rangle_{s\in S}\right)$.( $\widetilde{\theta}$
be a continuous extension of $\theta$.)
\end{thm}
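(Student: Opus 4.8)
The plan is to construct $\widetilde{\theta}$ from the universal property of the Stone-\v{C}ech compactification and then verify the homomorphism and surjectivity claims by the standard two-stage density argument, taking care to pinpoint exactly where continuity of the individual maps $T_s$ is used. The preliminary observations are that $X^X$, carrying the product (pointwise convergence) topology, is compact Hausdorff by Tychonoff's theorem, and that under composition $\left(\sigma,\tau\right)\mapsto\sigma\circ\tau$ it is a right topological semigroup: for fixed $\tau$, the map $\sigma\mapsto\sigma\circ\tau$ is continuous because the evaluation $\left(\sigma\circ\tau\right)(x)=\sigma\left(\tau(x)\right)$ reads $\sigma$ only at the single point $\tau(x)$. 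Since $\theta:S\to X^X$ maps a discrete space into a compact Hausdorff space, it extends uniquely to a continuous map $\widetilde{\theta}:\beta S\to X^X$, so continuity comes for free. The dynamical identity $T_s\circ T_t=T_{st}$ says that $\theta$ is already a homomorphism on $S$.

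Next I would establish $\widetilde{\theta}(pq)=\widetilde{\theta}(p)\circ\widetilde{\theta}(q)$ for all $p,q\in\beta S$ in two steps, each comparing two continuous functions that agree on the dense set $S\subseteq\beta S$. In the first step, fix $s\in S$ and compare the maps $q\mapsto\widetilde{\theta}(sq)$ and $q\mapsto T_s\circ\widetilde{\theta}(q)$. The former is continuous because $S$ lies in the topological center of $\beta S$, so $\lambda_s$ is continuous, and $\widetilde{\theta}$ is continuous. The latter is continuous \emph{precisely because} each $T_s:X\to X$ is continuous: left composition by $T_s$ on $X^X$ is continuous, since $T_s^{-1}(U)$ is open whenever $U$ is, so an evaluation constraint pulls back to an evaluation constraint. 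These two maps agree on $S$, as both send $t\in S$ to $T_{st}$, hence they agree everywhere, yielding $\widetilde{\theta}(sq)=T_s\circ\widetilde{\theta}(q)$.

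In the second step, fix $q\in\beta S$ and compare $p\mapsto\widetilde{\theta}(pq)$ with $p\mapsto\widetilde{\theta}(p)\circ\widetilde{\theta}(q)$. The first is continuous since $\rho_q$ is continuous in the right topological semigroup $\beta S$ and $\widetilde{\theta}$ is continuous; the second is continuous since right composition by $\widetilde{\theta}(q)$ is continuous in the right topological semigroup $X^X$. By the first step these agree on $S$, hence on all of $\beta S$, establishing the homomorphism property. For the image, I would observe that $\widetilde{\theta}(\beta S)$ is a continuous image of a compact set, hence compact and closed in the Hausdorff space $X^X$, and it contains $\theta(S)=\{T_s:s\in S\}$, so it contains the enveloping semigroup $E=\overline{\{T_s:s\in S\}}$; conversely $\widetilde{\theta}(\beta S)=\widetilde{\theta}(\overline{S})\subseteq\overline{\theta(S)}=E$, giving equality.

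I expect the main obstacle to be the careful bookkeeping of the two-stage argument, and in particular not conflating the two distinct continuity inputs: the right topological structure of $\beta S$ and of $X^X$ (which controls continuity in the outer variable) versus the continuity of each individual $T_s$ (which is what makes left composition by $T_s$ continuous in the inner variable). Ordering the two density arguments correctly, so that the first supplies the identity that lets the second close on the dense set $S$, is the delicate point that makes the whole proof go through.
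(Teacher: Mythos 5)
Your proof is correct and is essentially the standard argument: the paper itself gives no proof of this statement (it is quoted from Hindman--Strauss, Theorem 19.11), and your construction of $\widetilde{\theta}$ via the universal property, the two-stage density argument for the homomorphism identity (using the right topological structure of $\beta S$ and $X^{X}$ for the outer variable and the continuity of each $T_{s}$ for the inner one), and the compactness argument for surjectivity onto $\overline{\left\{ T_{s}:s\in S\right\} }$ all match the proof in that reference. The only cosmetic point is that Hausdorffness of $X^{X}$ comes from that of $X$ (Tychonoff gives compactness), and Hausdorffness of $X$ must indeed be assumed, as the paper's later proofs also do.
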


\noindent Let us recall the definition \cite[Definition 19.12]{key-4-2}, which
will be useful.
\begin{defn}
Let $\left(X,\langle T_{s}\rangle_{s\in S}\right)$ be a dynamical
system and define $\theta:S\rightarrow X^{X}$ by $\theta\left(s\right)=T_{s}$.
For each $p\in\beta S$, let $T_{p}=\widetilde{\theta}\left(p\right)$.
\end{defn}

\noindent As an immediate consequences of Theorem 8, we have the following remark \cite[Remark 19.13]{key-4-2}.
\begin{rem}
\label{rem} Let $\left(X,\langle T_{s}\rangle_{s\in S}\right)$ be
a dynamical system and let $p,q\in\beta S$. Then $T_{p}\circ T_{q}=T_{pq}$
and for each $x\in X$, $T_{p}\left(x\right)=p-lim_{s\in S}T_{s}(x)$.
\end{rem}

\noindent {\bf Strategy of our proof:} In the next section, we will first establish the relation between $\mathcal{F}$-proximality and the ultrafilters containing $\mathcal{F}$. Then we will establish the relation between algebra and $\mathcal{F}$-uniformly recurrent point. Soon after we will deduce three lemma which will explore the relation between algebra, $\mathcal{F}$-proximal and $\mathcal{F}$ uniformly recurrent point. Then using these results we will obtain our desire result.  
\section{Dynamical characterization of $\mathcal{F}$-central set}

From \cite[Lemma 19.22]{key-4-2}, we get the characterization of proximality
which states that for a dynamical system $\left(X,\langle T_{s}\rangle_{s\in S}\right)$
and $x,y\in S$. Points $x$ and $y$ of $X$ are proximal if and
only if there is some $p\in\beta S$ such that $T_{p}(x)=T_{p}(y)$.
We get an analogous result for $\mathcal{F}$-proximality by the following
lemma, which will be very convenient for us.
\begin{lem}
\label{11} Let $S$ be a semigroup, and $\left(X,\langle T_{s}\rangle_{s\in S}\right)$
be a dynamical system. Then $x,y\in X$ are $\mathcal{F}$-proximal
if and only if there is some $p\in\overline{\mathcal{F}}$ such that
$T_{p}(x)=T_{p}(y)$.
\end{lem}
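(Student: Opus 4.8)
The plan is to mirror the classical proof of the proximality characterization (\cite[Lemma 19.22]{key-4-2}), inserting the filter $\mathcal{F}$ at the two places where it matters: the ultrafilter we build must contain $\mathcal{F}$, and the witnesses demanded by $\mathcal{F}$-proximality must live inside members of $\mathcal{F}$. Throughout, write $\Delta=\{(w,w):w\in X\}$ for the diagonal of $X\times X$, and for any neighbourhood $U$ of $\Delta$ set $B_{U}=\{s\in S:(T_{s}(x),T_{s}(y))\in U\}$. By Remark \ref{rem}, $(T_{p}(x),T_{p}(y))=p\text{-}\lim_{s\in S}(T_{s}(x),T_{s}(y))$, so $T_{p}(x)=T_{p}(y)$ is equivalent to this $p$-limit lying on $\Delta$; this reformulation is what links the two sides of the lemma.

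For the (easier) converse, suppose $p\in\overline{\mathcal{F}}$ satisfies $T_{p}(x)=T_{p}(y)=z$. Fix a neighbourhood $U$ of $\Delta$ and some $F\in\mathcal{F}$. Since $(z,z)\in\Delta$, the set $U$ is a neighbourhood of $(z,z)$, so the $p$-limit property forces $B_{U}\in p$; and $F\in p$ because $\mathcal{F}\subseteq p$. Hence $B_{U}\cap F\in p$ is nonempty, and any $s$ in it witnesses $(T_{s}(x),T_{s}(y))\in U$ with $s\in F$. As $U$ and $F$ were arbitrary, $x$ and $y$ are $\mathcal{F}$-proximal.

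For the forward direction I would construct $p$ directly as an ultrafilter extending the family $\mathcal{A}=\mathcal{F}\cup\{B_{U}:U\text{ a neighbourhood of }\Delta\}$. The crux is that $\mathcal{A}$ has the finite intersection property. Given $F_{1},\dots,F_{n}\in\mathcal{F}$ and neighbourhoods $U_{1},\dots,U_{m}$ of $\Delta$, the set $F=F_{1}\cap\dots\cap F_{n}$ lies in $\mathcal{F}$ and $U=U_{1}\cap\dots\cap U_{m}$ is again a neighbourhood of $\Delta$; $\mathcal{F}$-proximality supplies an $s\in F$ with $(T_{s}(x),T_{s}(y))\in U$, and such an $s$ belongs to every $F_{i}$ and every $B_{U_{j}}$, so the finite intersection is nonempty. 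Extend $\mathcal{A}$ to an ultrafilter $p$. Then $\mathcal{F}\subseteq p$ gives $p\in\overline{\mathcal{F}}$, and $B_{U}\in p$ for every neighbourhood $U$ of $\Delta$.

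It remains to deduce $T_{p}(x)=T_{p}(y)$, i.e. that $z=(T_{p}(x),T_{p}(y))$ lies on $\Delta$. If not, then $z\notin\Delta$, and since $X$ (hence $X\times X$) is compact Hausdorff and $\Delta$ is closed, normality separates the point $z$ from $\Delta$ by disjoint open sets $V\ni z$ and $U\supseteq\Delta$. The $p$-limit property gives $B_{V}\in p$, while the construction gives $B_{U}\in p$; but $U\cap V=\emptyset$ forces $B_{U}\cap B_{V}=\emptyset$, contradicting that both lie in the ultrafilter $p$. Hence $z\in\Delta$ and $T_{p}(x)=T_{p}(y)$. The main obstacle is this forward direction, specifically verifying the finite intersection property, where the filter and proximality hypotheses must be combined, together with the separation step, which is exactly where the (tacit) Hausdorffness of $X$ is used.
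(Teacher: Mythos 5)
Your proof is correct, and its skeleton coincides with the paper's: the converse direction extracts witnesses from $B_{U}\cap F\in p$ exactly as the paper does, and the forward direction builds $p$ as an ultrafilter extending $\mathcal{F}\cup\{B_{U}:U\text{ a neighbourhood of }\Delta\}$ after checking the finite intersection property (which you verify explicitly, where the paper merely asserts it follows from Definition \ref{fdym}). The one genuine difference is how you finish the forward direction. The paper works with $z=T_{p}(x)$ inside $X$: given an open neighbourhood $V$ of $z$, it constructs an ad hoc neighbourhood $U=\left(V\times V\right)\cup\left(V_{2}\times X\right)$ of the diagonal, using disjoint open sets $V_{1}\ni z$ and $V_{2}\supseteq X\setminus V$, and then shows the intersection of two members of $p$ forces $T_{s}(y)\in V$, so that $T_{p}(y)=z$. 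You instead pass to the product: $\left(T_{p}(x),T_{p}(y)\right)$ is the $p$-limit of $\left(T_{s}(x),T_{s}(y)\right)$ in $X\times X$, and if it lay off the diagonal, regularity of the compact Hausdorff space $X\times X$ would separate it from the closed set $\Delta$ by disjoint open sets, putting two disjoint sets into the ultrafilter $p$ --- a contradiction. Your version is cleaner, replacing the paper's explicit neighbourhood construction with a one-line separation argument; the only cost is that you invoke the standard fact that $p$-limits commute with finite products, which Remark \ref{rem} does not literally state (though it follows immediately by checking basic open boxes). Both arguments rest on the tacit assumption that $X$ is Hausdorff --- the paper's definition of a dynamical system omits this, but its own proof also uses it --- and you correctly flag that point.
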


\begin{proof}
Let $x,y\in X$ are $\mathcal{F}$-proximal. Let $\mathcal{N}$ be
the set of all neighbourhoods of the diagonal in $X\times X$. For
each $U\in\mathcal{N}$, let \textbf{$B_{U}=\left\{ s\in S:\left(T\left(x\right),T_{s}\left(y\right)\right)\in U\right\} $}.
From definition \ref{fdym}, it follows that $\left\{ B_{U}:U\in\mathcal{N}\right\} \cup\mathcal{F}$
has finite intersection property. Now choose $p\in\overline{\mathcal{F}}$
such that $\left\{ B_{U}:U\in\mathcal{N}\right\} \cup\mathcal{F}\in p$.
Let $z=T_{p}\left(x\right)$. To see that $z=T_{p}\left(y\right)$,
let $V$ be an open neighbourhood of $z$ in $X$. Since $X$ is compact
Hausdorff, there exist disjoint open sets $V_{1}$, $V_{2}$ such
that $z\in V_{1}$ and $X\setminus V\subseteq V_{2}$. Let $U=\left(V\times V\right)\cup\left(V_{2}\times X\right)$.
Then $U$ is a neighbourhood of the diagonal in $X\times X$ such that
$\pi_{2}\left(\pi_{1}^{-1}\left(V_{1}\right)\cap U\right)\subseteq V$,
where $\pi_{1}$ and $\pi_{2}$ denote the first and second projections
of $X\times X$ on to $X$ respectively. Let $E=\left\{ s\in S:T_{s}\left(x\right)\in V_{1}\right\} $
and $F=\left\{ s\in S:\left(T_{s}\left(x\right),T_{s}\left(y\right)\right)\in U\right\} $.
Then $E,F\in p$ and $E\cap F\subseteq\left\{ s\in S:T_{s}\left(y\right)\in V\right\} $.
Thus $\left\{ s\in S:T_{s}\left(y\right)\in V\right\} \in p$ for
every open neighbourhood $V$ of $z$.

Conversely suppose, we have $p\in\overline{\mathcal{F}}$ such that
$T_{p}(x)=T_{p}(y)=z$. Let $U$ be a neighbourhood of the diagonal
in $X\times X$. Choose an open neighbourhood $V$ of $z$ in $X$
such that $V\times V\subseteq U$. Let $B=\left\{ s\in S:T_{s}\left(x\right)\in V\right\} $
and $C=\left\{ s\in S:T_{s}\left(y\right)\in V\right\} $. Then $B\cap C\in p.$
For each $F\in\mathcal{F}$, choose $s\in F\cap B\cap C$. Then $\left(T_{s}\left(x\right),T_{s}(y)\right)\in V\times V\subseteq U$.
\end{proof}
In order to establish the equivalence of dynamical and algebraic notions
of central sets, the following theorem is necessary.
\begin{thm}
\label{4eq} Let $S$ be a semigroup and $\left(X,\langle T_{s}\rangle_{s\in S}\right)$
be a dynamical system and $L$ be a minimal left ideal of \textup{$\overline{\mathcal{F}}$}
and $x\in X$. The following statements are equivalent.
\begin{enumerate}[label=(\alph*)]
\item The points $x$ is a $\mathcal{F}$-uniformly recurrent point of $\left(X,\langle T_{s}\rangle_{s\in S}\right)$.
\item There exists $u\in L$ such that $T_{u}(x)=x$.
\item There exist $y\in X$ and an idempotent $u\in L$ such that $T_{u}(y)=x$.
\item There exists an idempotent $u\in L$ such that $T_{u}(x)=x$.
\end{enumerate}
\end{thm}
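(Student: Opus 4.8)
The plan is to run the cycle of implications, combining two short algebraic observations with a characterization of $\mathcal{F}$-syndeticity in terms of the minimal left ideals of $\overline{\mathcal{F}}$. First I would dispose of the formal links. The equivalence of (c) and (d) is immediate from Remark \ref{rem}: if $u\in L$ is an idempotent with $T_{u}(y)=x$, then $T_{u}(x)=T_{u}(T_{u}(y))=T_{uu}(y)=T_{u}(y)=x$, so (c) $\Rightarrow$ (d); conversely (d) $\Rightarrow$ (c) by taking $y=x$, and trivially (d) $\Rightarrow$ (b). This reduces the theorem to the three genuine links (a) $\Rightarrow$ (b), (b) $\Rightarrow$ (d), and (d) $\Rightarrow$ (a).

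For (b) $\Rightarrow$ (d) I would set $M=\{v\in L:T_{v}(x)=x\}$. Since $v\mapsto T_{v}(x)=\widetilde{\theta}(v)(x)$ is continuous and $X$ is Hausdorff, $M$ is closed; it is a subsemigroup because $T_{v_{1}v_{2}}(x)=T_{v_{1}}(T_{v_{2}}(x))=x$ and $v_{1}v_{2}\in L$ as $L$ is a left ideal, and it is nonempty by hypothesis. Then $M$ is a compact right topological semigroup, so Ellis's theorem supplies an idempotent in $M$, which is precisely the witness demanded by (d).

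The heart of the argument is the following characterization, which I would isolate as a lemma: a set $A\subseteq S$ is $\mathcal{F}$-syndetic if and only if $\overline{A}\cap L'\neq\emptyset$ for every minimal left ideal $L'$ of $\overline{\mathcal{F}}$. For the forward direction, given $p\in L'$ and $F\in\mathcal{F}$, $\mathcal{F}$-syndeticity yields a finite $G\subseteq F$ with $G^{-1}A=\bigcup_{t\in G}t^{-1}A\in\mathcal{F}\subseteq p$, so some $t\in G$ satisfies $t^{-1}A\in p$; hence $B:=\{t:t^{-1}A\in p\}$ meets every member of $\mathcal{F}$, and extending $\mathcal{F}\cup\{B\}$ to an ultrafilter $r\in\overline{\mathcal{F}}$ gives $A\in rp\in L'$. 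For the converse I would argue contrapositively: if $A$ is not $\mathcal{F}$-syndetic there is $F_{0}\in\mathcal{F}$ with $G^{-1}A\notin\mathcal{F}$ for every finite $G\subseteq F_{0}$; upward-closedness of the filter then makes $\{S\setminus G^{-1}A:G\subseteq F_{0}\ \text{finite}\}\cup\mathcal{F}$ a family with the finite intersection property, and any $p\in\overline{\mathcal{F}}$ extending it satisfies $\{t:t^{-1}A\in p\}\subseteq S\setminus F_{0}$, whence $A\notin qp$ for all $q\in\overline{\mathcal{F}}$ and the minimal left ideal inside $\overline{\mathcal{F}}p$ misses $\overline{A}$. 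I expect this lemma to be the main obstacle, precisely because the classical proof multiplies $p$ on the left by a principal ultrafilter $t\in S$, whereas here $t$ need not lie in $\overline{\mathcal{F}}$; the shape of the $\mathcal{F}$-syndetic condition, namely $G^{-1}A\in\mathcal{F}$ rather than $G^{-1}A=S$, is exactly what lets me stay inside $\overline{\mathcal{F}}$ by passing to an ultrafilter instead of a single point.

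With the lemma in hand the remaining two implications are short. For (a) $\Rightarrow$ (b), each return set $R(U)=\{s:T_{s}(x)\in U\}$ is $\mathcal{F}$-syndetic, so $\overline{R(U)}\cap L\neq\emptyset$; the family $\{\overline{R(U)}\cap L:U\ni x\}$ has the finite intersection property, since $R(U_{1})\cap\cdots\cap R(U_{n})\supseteq R(U_{1}\cap\cdots\cap U_{n})$, and $L$ is compact, so some $u\in L$ lies in every $\overline{R(U)}$; then $T_{u}(x)=u\text{-}\lim_{s}T_{s}(x)=x$ by Hausdorffness. For (d) $\Rightarrow$ (a), given an idempotent $u\in L$ with $T_{u}(x)=x$ and a neighbourhood $U$ of $x$, I have $R(U)\in u$, and to invoke the lemma I must show $R(U)$ meets every minimal left ideal $L'$. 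Here I would transfer the fixed point via equivalent idempotents: letting $e$ be the identity of the group $L'\cap u\overline{\mathcal{F}}$, the fact that $e$ and $u$ lie in the same minimal right ideal gives $eu=u$, so $T_{e}(x)=T_{e}(T_{u}(x))=T_{eu}(x)=T_{u}(x)=x$ and thus $R(U)\in e\in L'$. Consequently $R(U)$ is $\mathcal{F}$-syndetic for every neighbourhood $U$, i.e. $x$ is $\mathcal{F}$-uniformly recurrent, which closes the cycle.
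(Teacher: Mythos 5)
Your proposal is correct, but it follows a genuinely different route from the paper's. The paper proves the cycle $(a)\Rightarrow(b)\Rightarrow(c)\Rightarrow(d)\Rightarrow(a)$ directly: for $(a)\Rightarrow(b)$ it explicitly builds $w\in\overline{\mathcal{F}}$ containing the sets $C_{U}=\{t_{V,F}\}$ of chosen translators and takes $u=w\cdot v$ for an arbitrary $v\in L$; for $(d)\Rightarrow(a)$ it argues by contradiction, taking $w\in\overline{\mathcal{F}}$ avoiding all translates $t^{-1}B$, noting $\left(\overline{\mathcal{F}}\cdot w\right)\cap\overline{B}=\emptyset$, and using minimality of $L$ to write $\overline{\mathcal{F}}\cdot w\cdot u=L$ and produce $v$ with $v\cdot u=u$, hence $B\in v$ --- a contradiction. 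You instead isolate a standalone lemma (an $\mathcal{F}$-analogue of the classical characterization of syndetic sets): $A$ is $\mathcal{F}$-syndetic iff $\overline{A}$ meets every minimal left ideal of $\overline{\mathcal{F}}$, and you derive both nontrivial implications from it --- $(a)\Rightarrow(b)$ by compactness of $L$ and the finite intersection property of the sets $\overline{R(U)}\cap L$, and $(d)\Rightarrow(a)$ by transferring the fixed point to every minimal left ideal $L'$ via the idempotent $e$ of the group $L'\cap u\overline{\mathcal{F}}$, using that $e$ is a left identity of the minimal right ideal $u\overline{\mathcal{F}}$ so that $eu=u$. Both arguments are sound (your use of left ideals, the FIP computations, and the identity $eu=u$ all check out), and your lemma's two halves are in fact close to the paper's embedded computations; the trade-off is that your version invokes more structure theory (minimal right ideals and the group $L\cap R$, which the paper's proof of this theorem never needs), but in exchange it yields a reusable characterization of $\mathcal{F}$-syndeticity of independent interest and the stronger conclusion that a point fixed by one minimal idempotent is fixed by an idempotent in \emph{every} minimal left ideal. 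A small bonus: your closedness argument for $M=\{v\in L:T_{v}(x)=x\}$ (continuity of $v\mapsto T_{v}(x)$ plus Hausdorffness) is cleaner than the paper's corresponding step in $(b)\Rightarrow(c)$, which as written asserts that $\overline{B}\cap L$ is a neighbourhood of $v$ missing $K$ when it should say that the complement of $\overline{B}$ is such a neighbourhood.
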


\begin{proof}
$\left(a\right)\Rightarrow\left(b\right)$ Choose any $v\in L$. Let
$\mathcal{N}$ be the set of neighbourhoods of $x$ in $X$. For each
$U\in\mathcal{N}$, let $B_{U}=\left\{ s\in S:T_{s}\left(x\right)\in U\right\} $.
Since $x$ is a $\mathcal{F}$-uniformly recurrent point, each $B_{U}$
is $\mathcal{F}$-syndetic set. So for every $F\in\mathcal{F}$, there
is some finite set $F_{U,F}\subset F$ such that $F_{U,F}^{-1}B_{U}\in\mathcal{F}\subset v$.
So for each $U\in\mathcal{N}$ and $F\in\mathcal{F}$ pick $t_{U,F}\in F_{U,F}$
such that $t_{U,F}^{-1}B_{U}\in v$. Given $U\in\mathcal{N}$, let
$C_{U}=\left\{ t_{V,F}:V\in\mathcal{N},V\subseteq U\,\text{and}\,F\in\mathcal{F}\right\} $.
Then $\left\{ C_{U}:U\in\mathcal{N}\right\} \cup\mathcal{F}$ has
finite intersection property. So pick $w\in\overline{\mathcal{F}}$
such that $\left\{ C_{U}:U\in\mathcal{N}\right\} \subseteq w$ and
let $u=w\cdot v$. Since $L$ is a left ideal of $\overline{\mathcal{F}}$
, $u\in L$. To see that $T_{u}(x)=x$, we let $U\in\mathcal{N}$
and show that $B_{U}\in u$, for which it suffices that $C_{U}\subseteq\left\{ t\in S:t^{-1}B_{U}\in v\right\} $.
So let $t\in C_{U}$ and pick $V\in\mathcal{N}$ and $F\in\mathcal{F}$
such that $V\subseteq U$ and $t=t_{V,F}$. Then $t^{-1}B_{V}\in v$
and $t^{-1}B_{V}\subseteq t^{-1}B_{U}$.

$(b)\Rightarrow(c)$ Let $K=\left\{ v\in L:T_{v}\left(x\right)=x\right\} $.
It suffices to show that $K$ is a compact subsemigroup of $L$, since
then $K$ has an idempotent. By the assumption, $K\neq\emptyset$.
Further if $v\in L\setminus K$ there is some neighbourhood of $x$
such that $B=\left\{ s\in S:T_{s}\left(x\right)\in U\right\} \notin v$.
Then $\overline{B}\cap L$ is a neighbourhood of $v$ in $\beta S$
which misses $K$. Finally, to see that $K$ is a semigroup, let $v,u\in K$.
Then $T_{v\cdot w}(x)=T_{v}\left(T_{w}\left(x\right)\right)=T_{v}\left(x\right)=x$.

$\left(c\right)\Rightarrow\left(d\right)$ Using remark \ref{rem},
we have $T_{u}(x)=T_{u}\left(T_{u}\left(y\right)\right)=T_{u}\left(y\right)=x$.

$\left(d\right)\Rightarrow\left(a\right)$ Let $U$ be a neighbourhood
of $x$ and let $B=\left\{ s\in S:T_{s}\left(x\right)\in U\right\} $
and suppose that $B$ is not $\mathcal{F}$-syndetic. Then there exists
$F\in\mathcal{F}$ such that
\[
\left\{ S\setminus\cup_{t\in F_{F}}t^{-1}B:F_{F}\,\text{is}\,\text{a}\,\text{finite}\,\text{nonempty}\,\text{subset}\,\text{of}\,F\right\} \cup\mathcal{F}
\]
 has finite intersection property. So pick some $w\in\overline{\mathcal{F}}$
such that 
\[
\left\{ S\setminus\cup_{t\in F_{F}}t^{-1}B:F_{F}\,\text{is}\,\text{a}\,\text{finite}\,\text{nonempty}\,\text{subset}\,\text{of}\,F\right\} \subseteq w.
\]
 Then $\left(\overline{\mathcal{F}}\cdot w\right)\cap\overline{B}=\emptyset$
(As $B\in v\cdot w$ implies $t^{-1}B\in w$ for some $t\in F$).
Now $\left(\overline{\mathcal{F}}\cdot w\right)$ is a left ideal
of $\overline{\mathcal{F}}$, so $\overline{\mathcal{F}}\cdot w\cdot u$
is a left ideal of $\overline{\mathcal{F}}$ which is contained in
$L$, and hence $\overline{\mathcal{F}}\cdot w\cdot u=L$. Thus we
may pick some $v\in\overline{\mathcal{F}}\cdot w$ such that $v\cdot u=u$.
Again $T_{v}(x)=T_{v}\left(T_{u}\left(x\right)\right)=T_{v\cdot u}\left(x\right)=T_{u}\left(x\right)=x$,
so in particular $B\in v$. But, $v\in\overline{\mathcal{F}}\cdot w$
and $\left(\overline{\mathcal{F}}\cdot w\right)\cap\overline{B}=\emptyset$,
a contradiction.
\end{proof}
\begin{lem}
Let $S$ be a semigroup and $\left(X,\langle T_{s}\rangle_{s\in S}\right)$
be a dynamical system and let $x\in X$. Then for each $F\in\mathcal{F}$,
there is a $\mathcal{F}$-uniformly recurrent point $y\in\overline{\left\{ T_{s}\left(x\right):s\in F\right\} }$
such that $x$ and $y$ are $\mathcal{F}$- proximal.
\end{lem}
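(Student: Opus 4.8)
The plan is to follow the classical argument for producing a uniformly recurrent point proximal to a given point, but to carry out all of the algebra inside the closed subsemigroup $\overline{\mathcal{F}}$ rather than inside $\beta S$. This is what makes both Lemma \ref{11} and Theorem \ref{4eq} available and, crucially, forces the chosen ultrafilter to contain $F$.

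First I would fix $F\in\mathcal{F}$ and note that $\overline{\mathcal{F}}$, being a closed (hence compact) subsemigroup of the compact right topological semigroup $\beta S$, is itself a compact right topological semigroup. Consequently it has a minimal left ideal $L$, and by Ellis' theorem $L$ contains an idempotent $u$. Since $u\in\overline{\mathcal{F}}=\bigcap_{G\in\mathcal{F}}\overline{G}$, in particular $F\in u$. I would then set $y=T_{u}(x)$ and verify the three required properties for this $y$.

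For $\mathcal{F}$-uniform recurrence, idempotency of $u$ together with Remark \ref{rem} gives $T_{u}(y)=T_{u}(T_{u}(x))=T_{u\cdot u}(x)=T_{u}(x)=y$; applying Theorem \ref{4eq} with $y$ in place of $x$ (the implication $(d)\Rightarrow(a)$, using the same $L$ and idempotent $u\in L$) yields that $y$ is $\mathcal{F}$-uniformly recurrent. For $\mathcal{F}$-proximality, the very same computation shows $T_{u}(x)=y=T_{u}(y)$, so $T_{u}(x)=T_{u}(y)$ with $u\in\overline{\mathcal{F}}$, and Lemma \ref{11} then delivers that $x$ and $y$ are $\mathcal{F}$-proximal.

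Finally, for the containment $y\in\overline{\{T_{s}(x):s\in F\}}$, I would use Remark \ref{rem} to write $y=T_{u}(x)=u\text{-}\lim_{s\in S}T_{s}(x)$: for every neighbourhood $V$ of $y$ the set $\{s\in S:T_{s}(x)\in V\}$ belongs to $u$, and intersecting it with $F\in u$ shows $\{s\in F:T_{s}(x)\in V\}$ is nonempty, whence $y$ lies in the desired closure. The only place where the filter genuinely enters is the requirement that $u$ be selected inside $\overline{\mathcal{F}}$: this single choice simultaneously guarantees $F\in u$ (so $y$ lands in the prescribed closure) and $L\subseteq\overline{\mathcal{F}}$ (so the $\mathcal{F}$-versions of the auxiliary results apply). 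I do not anticipate any serious obstacle beyond confirming that $\overline{\mathcal{F}}$ inherits enough structure to supply a minimal left ideal containing an idempotent; I would also remark that, since $u$ does not depend on $F$, the single point $y=T_{u}(x)$ in fact works for every $F\in\mathcal{F}$ at once.
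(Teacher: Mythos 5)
Your proposal is correct and follows essentially the same route as the paper: pick an idempotent $u$ in a minimal left ideal of $\overline{\mathcal{F}}$, set $y=T_{u}\left(x\right)$, get the closure containment from $\mathcal{F}\subseteq u$, uniform recurrence from Theorem \ref{4eq}, and proximality from $T_{u}\left(y\right)=T_{u\cdot u}\left(x\right)=T_{u}\left(x\right)$ together with Lemma \ref{11}. Your write-up is in fact slightly more careful than the paper's (which miscites Lemma \ref{11} where Theorem \ref{4eq} is meant for the recurrence step), and your closing observation that the single point $y$ works for every $F\in\mathcal{F}$ simultaneously is implicit in the paper as well.
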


\begin{proof}
Let $L$ be any minimal left ideal of $\overline{\mathcal{F}}$ and
pick an idempotent $u\in L$. Let $y=T_{u}\left(x\right)$. For each
$F\in\mathcal{F}$, clearly $y\in\overline{\left\{ T_{s}\left(x\right):s\in F\right\} }$,
as $\mathcal{F}\subseteq u$.By lemma \ref{11}, $y$ is a $\mathcal{F}$-uniformly
recurrent point of $\left(X,\langle T_{s}\rangle_{s\in S}\right)$.
By remark \ref{rem}, we have $T_{u}(y)=T_{u}\left(T_{u}\left(x\right)\right)=T_{u\cdot u}\left(x\right)=T_{u}\left(x\right)$.
So by Lemma \ref{11}, $x$ and $y$ are $\mathcal{F}$- proximal.
\end{proof}
\begin{lem}
\label{15} Let $S$ be a semigroup and $\left(X,\langle T_{s}\rangle_{s\in S}\right)$
be a dynamical system and let $x,y\in X$. If $x$ and $y$ are $\mathcal{F}$-
proximal, then there is a minimal left ideal $L$ of $\overline{\mathcal{F}}$
such that $T_{u}\left(x\right)=T_{u}\left(y\right)$ for all $u\in L$.
\end{lem}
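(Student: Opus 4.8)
The plan is to reduce the statement to the algebraic characterization of $\mathcal{F}$-proximality supplied by Lemma \ref{11}, and then to exploit the fact that the equality $T_{p}(x)=T_{p}(y)$ is preserved under left multiplication. First I would apply Lemma \ref{11} to the hypothesis that $x$ and $y$ are $\mathcal{F}$-proximal, obtaining a single ultrafilter $p\in\overline{\mathcal{F}}$ with $T_{p}(x)=T_{p}(y)$. This $p$ will serve as the seed from which an entire minimal left ideal is produced.

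Next I would form the set $L'=\overline{\mathcal{F}}\cdot p$. Since $\mathcal{F}$ generates a closed subsemigroup, $\overline{\mathcal{F}}$ is a closed, hence compact, right topological semigroup; thus $L'$ is the image of $\overline{\mathcal{F}}$ under the continuous map $\rho_{p}\colon q\mapsto q\cdot p$, so $L'$ is a compact (in particular closed) left ideal of $\overline{\mathcal{F}}$. Here one checks directly that $L'$ is a left ideal of $\overline{\mathcal{F}}$ itself: for $r\in\overline{\mathcal{F}}$ and $q\cdot p\in L'$ one has $r\cdot(q\cdot p)=(r\cdot q)\cdot p\in L'$. Invoking the standard structure theory of compact right topological semigroups (see \cite{key-4-2}), every left ideal contains a minimal left ideal, so I may pick a minimal left ideal $L$ of $\overline{\mathcal{F}}$ with $L\subseteq L'$.

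It then remains to verify the coincidence on all of $L$. Given $u\in L\subseteq\overline{\mathcal{F}}\cdot p$, I write $u=q\cdot p$ for some $q\in\overline{\mathcal{F}}$, and using the homomorphism property $T_{qp}=T_{q}\circ T_{p}$ from Remark \ref{rem} together with the defining property of $p$, I compute
\[
T_{u}(x)=T_{q}\bigl(T_{p}(x)\bigr)=T_{q}\bigl(T_{p}(y)\bigr)=T_{u}(y).
\]
Since this holds for every $u\in L$, the minimal left ideal $L$ is exactly as required, completing the argument.

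The individual verifications are routine, so the only point that genuinely requires care is the passage from the single witnessing ultrafilter $p$ to a full minimal left ideal. The key observation making this work is that the ``coincidence set'' $\{q\in\overline{\mathcal{F}}:T_{q}(x)=T_{q}(y)\}$ is closed under left multiplication, whence $\overline{\mathcal{F}}\cdot p$ lies entirely inside it and any minimal left ideal contained in $L'$ consists solely of coincidence points. Accordingly, I expect the main obstacle to be confirming that $L'$ is a left ideal of $\overline{\mathcal{F}}$ (and not merely of $\beta S$) and that minimal left ideals of $\overline{\mathcal{F}}$ exist inside any prescribed left ideal; both are consequences of $\overline{\mathcal{F}}$ being a compact right topological semigroup, which is precisely what our standing assumption on $\mathcal{F}$ guarantees.
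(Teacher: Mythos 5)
Your proposal is correct and follows essentially the same route as the paper's own proof: invoke Lemma \ref{11} to get a witness $p\in\overline{\mathcal{F}}$ with $T_{p}(x)=T_{p}(y)$, take a minimal left ideal $L\subseteq\overline{\mathcal{F}}\cdot p$, and for each $u=q\cdot p\in L$ use Remark \ref{rem} to compute $T_{u}(x)=T_{q}(T_{p}(x))=T_{q}(T_{p}(y))=T_{u}(y)$. Your additional verifications (that $\overline{\mathcal{F}}\cdot p$ is a closed left ideal of $\overline{\mathcal{F}}$ and that it contains a minimal left ideal by compactness) are steps the paper leaves implicit, and they are correct.
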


\begin{proof}
Pick $v\in\overline{\mathcal{F}}$ such that $T_{v}\left(x\right)=T_{v}\left(y\right)$
and pick a minimal left ideal $L$ of $\overline{\mathcal{F}}$ such
that $L\subseteq\overline{\mathcal{F}}\cdot v$. To see that $L$
is as required, let $u\in L$ and choose $w\in\overline{\mathcal{F}}$
such that $u=w\cdot v$. Then by remark \ref{rem}, we have $T_{u}\left(x\right)=T_{w\cdot v}\left(x\right)=T_{w}\left(T_{v}\left(x\right)\right)=T_{w}\left(T_{v}\left(y\right)\right)=T_{w\cdot v}\left(y\right)=T_{u}\left(y\right)$.
\end{proof}
\begin{lem}
Let $S$ be a semigroup and $\left(X,\langle T_{s}\rangle_{s\in S}\right)$
be a dynamical system and let $x\in X$. Then there exists an idempotent
$u$ in $K\left(\overline{\mathcal{F}}\right)$ such that $T_{u}\left(x\right)=y$
if and only if both $y$ is $\mathcal{F}$-uniformly recurrent and
$x$ and $y$ are $\mathcal{F}$- proximal.
\end{lem}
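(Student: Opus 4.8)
The plan is to read this lemma as a synthesis of the three preceding results, with both directions resting on the standard fact that an idempotent belongs to $K(\overline{\mathcal{F}})$ exactly when it lies in some minimal left ideal of $\overline{\mathcal{F}}$. So throughout I would freely pass between ``$u$ is a minimal idempotent'' and ``$u$ is an idempotent in some minimal left ideal $L$.''

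For the forward direction, suppose $u$ is an idempotent in $K(\overline{\mathcal{F}})$ with $T_u(x)=y$, and fix a minimal left ideal $L$ containing $u$. I would first get proximality: by idempotency and Remark \ref{rem}, $T_u(y)=T_u(T_u(x))=T_{uu}(x)=T_u(x)=y$, so $T_u(x)=T_u(y)$; since $u\in\overline{\mathcal{F}}$, Lemma \ref{11} gives that $x$ and $y$ are $\mathcal{F}$-proximal. For uniform recurrence of $y$, observe that the hypothesis $T_u(x)=y$ is precisely condition (c) of Theorem \ref{4eq} taken at the point $y$ (with $x$ in the role of the auxiliary point), so the implication (c)$\Rightarrow$(a) yields that $y$ is $\mathcal{F}$-uniformly recurrent.

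For the converse, assume $y$ is $\mathcal{F}$-uniformly recurrent and that $x,y$ are $\mathcal{F}$-proximal. The crux is to use \emph{one} minimal left ideal for both hypotheses. I would first apply Lemma \ref{15} to the proximal pair to produce a minimal left ideal $L$ of $\overline{\mathcal{F}}$ with $T_v(x)=T_v(y)$ for every $v\in L$. Then I would apply Theorem \ref{4eq}, implication (a)$\Rightarrow$(d), to the point $y$ \emph{with this same} $L$ — permissible because that theorem is stated for an arbitrary minimal left ideal — to obtain an idempotent $u\in L$ with $T_u(y)=y$. Combining the two, $T_u(x)=T_u(y)=y$, and since $u\in L\subseteq K(\overline{\mathcal{F}})$ is idempotent, it is the required minimal idempotent.

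The only real subtlety, and the step I would flag in the write-up, is this matching of minimal left ideals: proximality (Lemma \ref{15}) hands me a \emph{specific} $L$ on which $T_\bullet(x)$ and $T_\bullet(y)$ agree, and I must invoke uniform recurrence on exactly that $L$ rather than on some other minimal left ideal. This is precisely why Theorem \ref{4eq} was formulated for a fixed but arbitrary $L$, and everything else is routine bookkeeping with Remark \ref{rem} and the identification of $K(\overline{\mathcal{F}})$ with the union of the minimal left ideals.
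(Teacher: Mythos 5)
Your proof is correct and follows essentially the same route as the paper: the forward direction uses idempotency plus Remark \ref{rem} and Lemma \ref{11} for $\mathcal{F}$-proximality and the implication (c)$\Rightarrow$(a) of Theorem \ref{4eq} for $\mathcal{F}$-uniform recurrence, while the converse combines Lemma \ref{15} with Theorem \ref{4eq} applied to the \emph{same} minimal left ideal, exactly as the paper does. The subtlety you flag --- invoking (a)$\Rightarrow$(d) on the specific $L$ produced by Lemma \ref{15} --- is indeed the key point, and it is handled identically (though more tersely) in the paper's own proof.
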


\begin{proof}
Since $u$ is a minimal idempotent of $\overline{\mathcal{F}}$, there
is a minimal left ideal $L$ of $\overline{\mathcal{F}}$ such that
$u\in L$. By theorem \ref{4eq}, $y$ is $\mathcal{F}$-uniformly
recurrent and by remark \ref{rem}, $T_{u}(y)=T_{u}\left(T_{u}\left(x\right)\right)=T_{u\cdot u}\left(x\right)=T_{u}\left(x\right)$.
So $x$ and $y$ are $\mathcal{F}$- proximal.

Conversely, by lemma \ref{15}, pick a minimal left ideal $L$ of
$\overline{\mathcal{F}}$ such that $T_{u}\left(x\right)=T_{u}\left(y\right)$
for all $u\in L$. Pick by theorem \ref{4eq}, an idempotent $u\in L$
such that $T_{u}\left(y\right)=y$. Hence $T_{u}\left(x\right)=y.$
\end{proof}
\noindent We now give the dynamical characterization of $\mathcal{F}$-central
sets.
\begin{proof}[\textbf{\textit{Proof of Theorem \ref{main}:}}]
Let $G=S\cup\left\{ e\right\} $, $X=\prod_{s\in G}\left\{ 0,1\right\} $
and for $s\in S$ define $T_{s}:X\rightarrow X$ by $T_{s}\left(x\right)\left(t\right)=x\left(t\cdot s\right)$
for all $t\in G$. Then by \cite[Lemma 19.14]{key-4-2} $\left(X,\langle T_{s}\rangle_{s\in S}\right)$
is a dynamical system. Now let $x=\chi_{B}$, the characteristic function
of $B$. Pick a minimal idempotent in $\overline{\mathcal{F}}$ such
that $B\in u$ and let $y=T_{u}\left(x\right)$. Then by theorem \ref{4eq}
$y$ is $\mathcal{F}$-uniformly recurrent and $x$ and $y$ are $\mathcal{F}$-
proximal. Now let $U=\left\{ z\in X:z\left(e\right)=y\left(e\right)\right\} $.
Then $U$ is a neighbourhood of $y\in X$. We note that $y\left(e\right)=1$.
Indeed, $y=T_{u}\left(x\right)$ so, $\left\{ s\in S:T_{s}\left(x\right)\in U\right\} \in u$
and we may choose some $s\in B$ such that $T_{s}\left(x\right)\in U$.
Then $y\left(e\right)=T_{s}\left(x\right)\left(e\right)=x\left(s\cdot e\right)=1$.
Thus given any $s\in S$, $s\in B\Longleftrightarrow x\left(s\right)=1\Longleftrightarrow T_{s}\left(x\right)\in U$.

Conversely, choose a dynamical system $\left(X,\langle T_{s}\rangle_{s\in S}\right)$,
points $x,y\in X$ and a neighbourhood $U$ of $y$ such that $x$
and $y$ are $\mathcal{F}$- proximal, $y$ is $\mathcal{F}$-uniformly
recurrent and $B=\left\{ s\in S:T_{s}\left(x\right)\in U\right\} $.
Choose by theorem \ref{4eq}, a minimal idempotent $u\in\overline{\mathcal{F}}$
such that $T_{u}\left(x\right)=y$. Then $B\in u$.
\end{proof}

\noindent \textbf{Acknowledgment: }The author acknowledges the grant UGC-NET
SRF fellowship with id no. 421333 of CSIR-UGC NET December 2016.

\end{document}